\numberwithin{equation}{section}
\newtheorem{theorem}{Theorem}[section]
\newtheorem{corollary}{Corollary}[section]
\newtheorem{definition}{Definition}[section]
\newtheorem{lemma}{Lemma}[section]
\theoremstyle{remark}
\newtheorem{remark}{Remark}[section]
\title[Some properties of analytic functions]
 {Some properties of analytic functions related with Booth lemniscate}
\subjclass[2010]{30C45}
\keywords{Univalent, Starlike, Convex, Strongly Starlike, Logarithmic Coefficients, Subordination.}
\begin{document}
\begin{abstract}
The object of the present paper is to study of two certain subclass of analytic functions related with Booth lemniscate which we denote by $\mathcal{BS}(\alpha)$ and $\mathcal{BK}(\alpha)$. Some properties of these subclasses are considered.
\end{abstract}
\author[P. Najmadi, Sh. Najafzadeh and A. Ebadian]
       {P. Najmadi, Sh. Najafzadeh and A. Ebadian }
\address{Department of Mathematics, Payame Noor University, P.O. Box 19395-3697 Tehran, Iran}
       \email {najmadi@phd.pnu.ac.ir {\rm(P. Najmadi)}}
       \email{najafzadeh1234@yahoo.ie {\rm(Sh. Najafzadeh)}}
       \email{aebadian@pnu.ac.ir {\rm (A. Ebadian)}}

\maketitle
\section{Introduction}

Let $\Delta$ be the open unit disk in the complex plane $\mathbb{C}$ and $\mathcal{A}$ be the class of normalized and analytic functions. Easily seen that any $f\in \mathcal{A}$ has the following form:
\begin{equation}\label{f}
  f(z)=z+a_2 z^2+a_3 z^3+\cdots \qquad(z\in \Delta).
\end{equation}
Further, by $\mathcal{S}$ we will denote the class of all functions in $\mathcal{A}$ which are univalent in
$\Delta$. The set of all functions $f\in \mathcal{A}$ that are starlike
univalent in $\Delta$ will be denote by $\mathcal{S}^*$ and the set of all functions $f\in \mathcal{A}$ that are convex univalent in $\Delta$ will be denote by $\mathcal{K}$.
Analytically, the function $f\in \mathcal{A}$ is a starlike univalent function, if and only if
\begin{equation*}
  \mathfrak{Re}\left\{\frac{zf'(z)}{f(z)}\right\}>0\qquad(z\in \Delta).
\end{equation*}
Also, $f\in \mathcal{A}$ belongs to the class $\mathcal{K}$, if and only if
\begin{equation*}
  \mathfrak{Re}\left\{1+\frac{zf''(z)}{f'(z)}\right\}>0\qquad(z\in \Delta).
\end{equation*}
For more details about this functions, the reader may refer to the book of Duren \cite{Dur}. Define by $\mathfrak{B}$ the class of analytic functions $w(z)$ in
$\Delta$ with $w(0)=0$ and $|w(z)|<1$, $(z \in \Delta)$.
Let $f$ and $g$ be two functions in $\mathcal{A}$. Then we say
that $f$ is subordinate to $g$, written $f (z)\prec g(z)$, if there
exists a function $w\in\mathfrak{B}$ such that $f(z)=g(w(z))$ for all
$z\in\Delta$.
Furthermore, if the function $g$ is univalent in $\Delta$, then
we have the following equivalence:
\begin{equation*}
    f (z)\prec g(z) \Leftrightarrow (f (0) = g(0)\quad {\rm and}\quad f (\Delta)\subset g(\Delta)).
\end{equation*}
 Recently, the authors \cite{psok, Pijsok}, (see also \cite{KarEba}) have studied the function
\begin{equation}\label{Falpha}
  F_{\alpha}(z):=\frac{z}{1-\alpha z^2}=\sum_{n=1}^{\infty}\alpha^{n-1}z^{2n-1}\qquad (z\in\Delta,~0\leq \alpha\leq1).
\end{equation}
We remark that the function $F_{\alpha}(z)$ is a starlike univalent function when $0\leq \alpha<1$.
In addition $F_{\alpha}(\Delta)=D(\alpha)$ ($0\leq \alpha<1$), where
\begin{equation*}\label{D(alpha)}
  D(\alpha)=\left\{x+iy\in\mathbb{C}: ~ \left(x^2+y^2\right)^2-\frac{x^2}{(1-\alpha)^2}-\frac{y^2}{(1+\alpha)^2}<0\right\}
\end{equation*}
and
\begin{equation*}\label{D(1)}
  F_1(\Delta)=\mathbb{C}\backslash \left\{(-\infty,-i/2]\cup [i/2,\infty)\right\}.
\end{equation*}

For $f\in \mathcal{A}$ we denote by $Area \, f(\Delta)$, the
area of the multi-sheeted image of the disk $\Delta_r = \{z \in \mathbb{C} : |z| < r\}$ $(0 < r \leq 1)$
under $f$. Thus, in terms of the coefficients of $f$, $f'(z)=\sum_{n=1}^{\infty}n a_n z^{n-1}$ one gets with the help of the classical Parseval-Gutzmer formula (see \cite{Dur}) the
relation
\begin{equation}\label{area}
  Area \, f(\Delta)=\int\int_{\Delta_r}|f'(z)|^2{\rm d}x{\rm d}y=\pi \sum_{n=1}^{\infty}n |a_n|^2 r^{2n},
\end{equation}
which is called the Dirichlet integral of $f$. Computing this area is known as the
area problem for the functions of type $f$. Thus, a function has a finite Dirichlet
integral exactly when its image has finite area (counting multiplicities). All
polynomials and, more generally, all functions $f\in \mathcal{A}$ for which $f'$ is bounded
on $\Delta$ are Dirichlet finite. Now by \eqref{Falpha}, \eqref{area} and since $\sum_{n=1}^{\infty}nr^{2(n-1)}=1/(1-r^2)^2$ we get:
\begin{corollary}
Let $0\leq \alpha< 1$. Then
\begin{equation*}
Area \, \{F_\alpha(\Delta)\}=\frac{\pi}{(1-\alpha^2)^2}.
\end{equation*}
\end{corollary}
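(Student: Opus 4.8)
The plan is to apply the Parseval-Gutzmer area formula \eqref{area} to $F_\alpha$ on the full disk (taking $r=1$) and then to evaluate the resulting coefficient series by means of the geometric-type identity quoted immediately before the statement. First I would read off the coefficients of $F_\alpha$ from its power series \eqref{Falpha}: there the term of index $n$ carries the coefficient $\alpha^{n-1}$, so that $|a_n|^2 = \alpha^{2(n-1)}$ for every $n \ge 1$. Because $0 \le \alpha < 1$ we have $\alpha^2 < 1$, which is exactly what is needed to ensure that the coefficient series converges and that $F_\alpha$ is Dirichlet finite; this is the only convergence issue that has to be noted.

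Next I would substitute these coefficients into \eqref{area} and set $r=1$, so that the radial factor $r^{2n}$ becomes $1$ and the formula reduces to
\begin{equation*}
Area \, \{F_\alpha(\Delta)\} = \pi \sum_{n=1}^{\infty} n\, |a_n|^2 = \pi \sum_{n=1}^{\infty} n\, \alpha^{2(n-1)}.
\end{equation*}
The problem is thereby reduced to summing one power series in $\alpha^2$. The key step is to recognize the series on the right as the identity $\sum_{n=1}^{\infty} n\, r^{2(n-1)} = 1/(1-r^2)^2$ recalled just before the corollary, now read with $r$ replaced by $\alpha$. This gives $\sum_{n=1}^{\infty} n\, \alpha^{2(n-1)} = 1/(1-\alpha^2)^2$, and multiplying through by $\pi$ produces exactly $\pi/(1-\alpha^2)^2$, which is the asserted value.

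Since the calculation itself is short, the main obstacle is purely one of bookkeeping: one must extract the coefficients from \eqref{Falpha} correctly and then align the summation index with the exponent in the quoted identity, so that $\alpha$ takes the role of $r$ there. Once the area has been written in the form $\pi \sum_{n\ge1} n\, \alpha^{2(n-1)}$, the identity closes the argument at once, and no further use of the univalence or the image-domain description of $F_\alpha$ is required.
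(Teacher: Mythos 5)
Your plan reproduces the paper's own derivation (Parseval--Gutzmer plus the geometric-series identity quoted before the corollary), but the coefficient-extraction step is wrong, and it is precisely the step on which the whole computation rests. In the expansion $F_\alpha(z)=\sum_{n=1}^{\infty}\alpha^{n-1}z^{2n-1}$ the coefficient $\alpha^{n-1}$ multiplies $z^{2n-1}$, not $z^n$: the function $F_\alpha$ is odd, so written as a power series $\sum_m a_m z^m$ it has $a_{2n-1}=\alpha^{n-1}$ and $a_{2n}=0$. In formula \eqref{area} the weight attached to $|a_m|^2$ is the exponent $m$ of $z^m$, so the correct evaluation (letting $r\to1^-$, which is also the cleaner way to handle the radius rather than setting $r=1$ outright) is
\begin{equation*}
Area\,\{F_\alpha(\Delta)\}=\pi\sum_{n=1}^{\infty}(2n-1)\,\alpha^{2(n-1)}
=\pi\left(\frac{2}{(1-\alpha^2)^2}-\frac{1}{1-\alpha^2}\right)
=\frac{\pi(1+\alpha^2)}{(1-\alpha^2)^2},
\end{equation*}
and not $\pi\sum_{n\geq1}n\,\alpha^{2(n-1)}=\pi/(1-\alpha^2)^2$. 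Your argument silently re-indexes the series as if $\alpha^{n-1}$ were the coefficient of $z^n$, and that re-indexing is exactly what manufactures the claimed value.

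The discrepancy is not a technicality: it shows that the corollary as printed is itself false, the paper having committed the same indexing slip that you did (so in that sense you have faithfully reconstructed the intended proof --- but both are in error). This can be checked independently of Parseval: since $F_\alpha$ is univalent for $0\leq\alpha<1$, the area in question is the area of the domain $D(\alpha)$ bounded by the Booth lemniscate, whose boundary in polar coordinates is $r^2(\theta)=\frac{\cos^2\theta}{(1-\alpha)^2}+\frac{\sin^2\theta}{(1+\alpha)^2}$, whence
\begin{equation*}
Area\,\{D(\alpha)\}=\frac{1}{2}\int_0^{2\pi}\left(\frac{\cos^2\theta}{(1-\alpha)^2}+\frac{\sin^2\theta}{(1+\alpha)^2}\right){\rm d}\theta
=\frac{\pi}{2}\cdot\frac{(1+\alpha)^2+(1-\alpha)^2}{(1-\alpha^2)^2}
=\frac{\pi(1+\alpha^2)}{(1-\alpha^2)^2},
\end{equation*}
in agreement with the corrected coefficient computation; for instance $\alpha=1/2$ gives $20\pi/9$, whereas the stated formula gives $16\pi/9$. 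So your proof cannot be repaired so as to yield $\pi/(1-\alpha^2)^2$; rather, the statement itself should be corrected to carry the factor $1+\alpha^2$ in the numerator.
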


Let $\mathcal{BS}(\alpha)$ be the subclass of $\mathcal{A}$ which satisfy the
condition
\begin{equation}\label{BS}
   \left(\frac{zf'(z)}{f(z)}-1\right)\prec F_{\alpha}(z)\qquad(z\in \Delta).
\end{equation}
The function class $\mathcal{BS}(\alpha)$ was studied extensively by Kargar $et$ $al$. \cite{KarEba}. The function
 \begin{equation}\label{ftilde}
    \tilde{f}(z) = z\left(\frac{1+z\sqrt{\alpha}}{1-z\sqrt{\alpha}} \right)^{\frac{1}{2\sqrt{\alpha}}},
  \end{equation}
is extremal function for several problems in the class $\mathcal{BS}(\alpha)$.
We note that the image of the function $F_{\alpha}(z)$ ($0\leq \alpha<1$) is the Booth lemniscate. We remark that a curve described by
\begin{equation*}\label{Blemn}
\left(x^2 + y^2\right)^2 - \left(n^4 + 2m^2\right)x^2 - \left(n^4 - 2m^2\right)y^2 = 0\qquad (x, y)\neq(0, 0),
\end{equation*}
(is a special case of Persian curve) was studied by Booth and is called the Booth lemniscate \cite{Booth}.
The Booth lemniscate is called elliptic if $n^4 > 2m^2$ while, for $n^4 < 2m^2$, it is termed hyperbolic. Thus
it is clear that the curve
\begin{equation*}
  \left(x^2+y^2\right)^2-\frac{x^2}{(1-\alpha)^2}-\frac{y^2}{(1+\alpha)^2}=0\qquad (x, y)\neq(0, 0),
\end{equation*}
is the Booth lemniscate of elliptic type. Thus the class $\mathcal{BS}(\alpha)$ is related to the Booth lemniscate.

In the present paper some properties of the class $\mathcal{BS}(\alpha)$ including, the order of strongly satarlikeness, upper and lower bound for $\mathfrak{Re} f(z)$, distortion and grow theorems and some sharp inequalities and logarithmic coefficients inequalities are considered. Also at the end, we introduce a certain subclass of convex functions.

\section{Main Results}
 Our first result is contained in the following. Further we recall that (see \cite{St}) the function $f$ is strongly starlike of order $\gamma$ and type $\beta$ in the disc $\Delta$, if it satisfies the following inequality:
\begin{equation}\label{stongly starlike}
  \left|\arg\left\{\frac{zf'(z)}{f(z)}-\beta\right\}\right|<\frac{\pi \gamma}{2}\qquad (0\leq \beta\leq 1, 0<\gamma\leq1).
\end{equation}
\begin{theorem}
  Let $0\leq \alpha \leq 1$ and $0<\varphi<2\pi$. If $f\in \mathcal{BS}(\alpha)$, then $f$ is strongly starlike function of order $\gamma(\alpha,\varphi)$ and type 1 where
  \begin{equation*}
    \gamma(\alpha,\varphi):=\frac{2}{\pi}\arctan \left(\frac{1+\alpha}{1-\alpha}|\tan \varphi|\right).
  \end{equation*}
\end{theorem}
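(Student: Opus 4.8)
The plan is to exploit the subordination directly. Since $f\in\mathcal{BS}(\alpha)$, by \eqref{BS} there is a Schwarz function $w\in\mathfrak{B}$ with
\[
\frac{zf'(z)}{f(z)}-1=F_\alpha(w(z))=\frac{w(z)}{1-\alpha\,w(z)^2},
\]
so that estimating $\arg\{zf'(z)/f(z)-1\}$ reduces to estimating $\arg F_\alpha(\zeta)$ for $\zeta=w(z)\in\Delta$. Writing $\zeta=re^{i\varphi}$ with $0\le r<1$, I would first record
\[
\arg F_\alpha(re^{i\varphi})=\varphi-\arg\!\left(1-\alpha r^2 e^{2i\varphi}\right),
\]
and the entire estimate will come from controlling the right-hand side in $r$ and then passing to the limit $r\to1^-$. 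Here $\varphi$ plays the role of $\arg w(z)$, which is why the conclusion is naturally phrased as a function of $\varphi$ ranging over $(0,2\pi)$.

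The core step is a monotonicity claim: for fixed $\varphi$ the quantity $|\arg F_\alpha(re^{i\varphi})|$ is nondecreasing in $r$, so its supremum along the ray $\{\arg\zeta=\varphi\}$ is attained as $r\to1^-$. Because $|\alpha r^2 e^{2i\varphi}|=\alpha r^2<1$, the point $1-\alpha r^2e^{2i\varphi}$ stays in the right half-plane, so no branch ambiguity occurs and one may write $\arg(1-\alpha r^2e^{2i\varphi})=-\arctan\frac{\alpha r^2\sin2\varphi}{1-\alpha r^2\cos2\varphi}$. Differentiating the resulting expression in $r^2$ shows the added term is monotone and keeps a fixed sign, which yields the claim; the symmetries $F_\alpha(\bar\zeta)=\overline{F_\alpha(\zeta)}$ and $F_\alpha(-\zeta)=-F_\alpha(\zeta)$ then reduce everything to $\varphi\in(0,\pi/2)$ and account for the appearance of $|\tan\varphi|$ (which has period $\pi$). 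I expect this monotonicity, together with the careful quadrant bookkeeping of the multivalued argument, to be the main obstacle; the remaining computations are routine.

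Finally I would compute the boundary value. On $|\zeta|=1$,
\[
F_\alpha(e^{i\varphi})=\frac{e^{i\varphi}}{1-\alpha e^{2i\varphi}}=\frac{1}{(1-\alpha)\cos\varphi-i(1+\alpha)\sin\varphi},
\]
whence
\[
\bigl|\arg F_\alpha(e^{i\varphi})\bigr|=\arctan\!\left(\frac{1+\alpha}{1-\alpha}\,|\tan\varphi|\right)=\frac{\pi}{2}\,\gamma(\alpha,\varphi).
\]
Combining this with the monotonicity step gives $|\arg\{zf'(z)/f(z)-1\}|<\tfrac{\pi}{2}\gamma(\alpha,\varphi)$ with $\varphi=\arg w(z)$, which is precisely the assertion that $f$ is strongly starlike of order $\gamma(\alpha,\varphi)$ and type $1$ in the sense of \eqref{stongly starlike}. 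Sharpness should follow from the extremal function \eqref{ftilde}: a short logarithmic-derivative computation gives $z\tilde f'(z)/\tilde f(z)-1=F_\alpha(z)$, so equality is approached as $z\to e^{i\varphi}$ radially.
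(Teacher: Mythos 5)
Your proposal is correct and follows essentially the same route as the paper: both arguments reduce to the computation $\tan\arg F_\alpha(re^{i\varphi})=\frac{1+\alpha r^2}{1-\alpha r^2}\tan\varphi$ on the circle $|{\zeta}|=r$ and then let $r\to1^-$ to reach the bound $\frac{1+\alpha}{1-\alpha}|\tan\varphi|$. The differences are only presentational --- you route through the Schwarz function $w$, identify $\varphi$ with $\arg w(z)$, and make the monotonicity in $r$ explicit, while the paper restricts the subordination to $\Delta_r$ and passes to the limit directly; the quadrant bookkeeping you flag as a potential obstacle is glossed over in the paper's proof in exactly the same way.
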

\begin{proof}
  Let $z=re^{i\varphi}(r<1)$ and $\varphi\in (0,2\pi)$. Then we have
\begin{align*}
  F_\alpha(re^{i\varphi}) &=\frac{re^{i\varphi}}{1-\alpha r^2e^{2i\varphi}}.\frac{1-\alpha r^2e^{-2i\varphi}}{1-\alpha r^2e^{-2i\varphi}}  \\
   &= \frac{r(1-\alpha r^2)\cos\varphi+ir(1+\alpha r^2)\sin\varphi}{1-2\alpha r^2 \cos 2\varphi+\alpha^2 r^4}.
  \end{align*}
Hence
\begin{align}\label{phir}
  \left|\frac{\mathfrak{Im}\{F_\alpha(re^{i\varphi})\}}{\mathfrak{Re}\{F_\alpha(re^{i\varphi})\}}\right| &=
  \left|\frac{(1+\alpha r^2)\sin\varphi}{(1-\alpha r^2)\cos\varphi}\right| \nonumber\\
   &= \frac{1+\alpha r^2}{1-\alpha r^2}|\tan \varphi|\qquad (\varphi\in (0,2\pi)).
\end{align}
For such $r$ the curve $F_\alpha(re^{i\varphi})$ is univalent in $\Delta_r =\{z : |z| < r\}$. Therefore
\begin{equation}\label{[]Leftrightarrow[]}
  \left[\left(\frac{zf'(z)}{f(z)}-1\right)\prec F_\alpha(z),\ \ z\in \Delta_r\right]\Leftrightarrow
  \left[\left(\frac{zf'(z)}{f(z)}-1\right)\in F_\alpha(\Delta_r), \ \ z\in \Delta_r\right].
\end{equation}
Then by \eqref{phir} and \eqref{[]Leftrightarrow[]}, we have
  \begin{align*}
    \left|\arg\left\{\frac{zf'(z)}{f(z)}-1\right\}\right| &= \left|\arctan \frac{\mathfrak{Im}[(zf'(z)/f(z))-1]}{\mathfrak{Re}[(zf'(z)/f(z))-1]}\right|\\
     &\leq \left|\arctan \frac{\mathfrak{Im}(F_\alpha(re^{i\varphi}))}{\mathfrak{Re}(F_\alpha(re^{i\varphi}))}\right|\\
     &< \arctan\left(\frac{1+\alpha r^2}{1-\alpha r^2}|\tan\varphi|\right),
  \end{align*}
  and letting $r\rightarrow 1^-$, the proof of the theorem is completed.
\end{proof}

In the sequel we define an analytic function $\mathcal{L}(z)$ by
\begin{equation}\label{L}
  \mathcal{L}(z)=\exp \int_{0}^{z} \frac{1+F_\alpha(t)}{t} {\rm d}t\qquad (0\leq \alpha\leq 3-2\sqrt{2}, t\neq0),
\end{equation}
where $F_\alpha$ is given by \eqref{Falpha}. Since the function $F_\alpha$ is convex univalent for $0\leq \alpha\leq 3-2\sqrt{2}$, thus as result of (cf. \cite{ma-minda}), the function $\mathcal{L}(z)$ is convex univalent function in $\Delta$.
\begin{theorem}\label{th. L}
  Let $0\leq \alpha\leq 3-2\sqrt{2}$. If $f\in \mathcal{BS}(\alpha)$, then
  \begin{equation*}
    \mathcal{L}(-r)\leq \mathfrak{Re}\{f(z)\}\leq \mathcal{L}(r)\qquad (|z|=r<1),
  \end{equation*}
  where $\mathcal{L}(.)$ defined by \eqref{L}.
\end{theorem}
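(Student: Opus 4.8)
The plan is to derive the two-sided estimate from a single subordination, $f(z)\prec\mathcal{L}(z)$, and then to read the bounds off the geometry of the convex set $\mathcal{L}(\Delta)$. First I would restate the membership $f\in\mathcal{BS}(\alpha)$ as
\begin{equation*}
\frac{zf'(z)}{f(z)}\prec 1+F_\alpha(z)=\frac{z\mathcal{L}'(z)}{\mathcal{L}(z)},
\end{equation*}
where the second equality is immediate from \eqref{L}: since $\log\mathcal{L}(z)=\int_0^z\frac{1+F_\alpha(t)}{t}\,{\rm d}t$, differentiation gives $z\mathcal{L}'(z)/\mathcal{L}(z)=1+F_\alpha(z)$. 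In this form $f$ is a Ma--Minda starlike function relative to the convex function $\mathcal{L}$, and the asserted inequality is precisely the real-part analogue of the growth theorem for such a class, with $\mathcal{L}$ as extremal function.

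Granting the subordination $f\prec\mathcal{L}$, the remainder is geometric. Because $F_\alpha$ has real Taylor coefficients, so does $\mathcal{L}$; hence $\mathcal{L}(\bar z)=\overline{\mathcal{L}(z)}$ and $\mathcal{L}(\Delta)$ is symmetric about the real axis. For fixed $r\in(0,1)$ the convex univalence of $\mathcal{L}$ makes $\mathcal{L}(\Delta_r)$ a bounded convex domain, and $f\prec\mathcal{L}$ together with the Schwarz lemma gives $f(\Delta_r)\subset\mathcal{L}(\Delta_r)$. The boundary curve $\theta\mapsto\mathcal{L}(re^{i\theta})$ has tangent vector $ire^{i\theta}\mathcal{L}'(re^{i\theta})$, which at $\theta=0$ and $\theta=\pi$ is purely imaginary since $\mathcal{L}'(\pm r)$ is real and nonzero. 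Thus the supporting lines of the convex domain $\mathcal{L}(\Delta_r)$ at its real boundary points $\mathcal{L}(r)$ and $\mathcal{L}(-r)$ are vertical, so the whole domain lies in the strip $\mathcal{L}(-r)\le\mathfrak{Re}\,w\le\mathcal{L}(r)$. Taking $w=f(z)$ with $|z|=r$ yields the claim, and the choice $f=\mathcal{L}\in\mathcal{BS}(\alpha)$ shows the bounds cannot be improved.

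The main obstacle is the opening step, the subordination $f\prec\mathcal{L}$, which must be extracted from the subordination of logarithmic derivatives. Setting $q(z)=\log(f(z)/z)$ with $q(0)=0$, the hypothesis reads $zq'(z)\prec F_\alpha(z)$ with $F_\alpha$ convex univalent and $F_\alpha(0)=0$, and the natural device is an integral-subordination lemma upgrading $zq'(z)\prec h(z)$ to $q(z)\prec\int_0^z h(t)/t\,{\rm d}t=\log(\mathcal{L}(z)/z)$ for convex $h$. The genuinely delicate point is that this only controls $\log(f/z)$, and exponentiation does not in general preserve subordination, so passing from $\log(f/z)\prec\log(\mathcal{L}/z)$ to $f\prec\mathcal{L}$ requires care; here I would lean on the convexity of $\mathcal{L}$ (guaranteed by $0\le\alpha\le 3-2\sqrt{2}$) and a Ma--Minda type subordination theorem, checking that its hypotheses are met in the present range of $\alpha$. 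I expect essentially all the difficulty to be concentrated in justifying this reduction.
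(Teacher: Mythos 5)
Your overall strategy coincides with the paper's: both arguments rest on the single subordination $f\prec\mathcal{L}$ and then extract the real-part bounds from the fact that $\mathcal{L}(\Delta_r)$ is a convex domain symmetric with respect to the real axis. Your treatment of the geometric half is correct and in fact cleaner than the paper's (the paper writes a chain of suprema and infima justified only by the phrase ``convex domain with respect to real axis''; your observation that the supporting lines of $\mathcal{L}(\Delta_r)$ at $\mathcal{L}(\pm r)$ are vertical is the right way to make this precise).

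The gap is exactly where you locate it: the subordination $f\prec\mathcal{L}$ is never actually established. The integral-subordination route (Suffridge/Hallenbeck--Ruscheweyh, applicable since $z\mapsto\int_0^z F_\alpha(t)t^{-1}{\rm d}t$ is convex) does give $\log\bigl(f(z)/z\bigr)\prec\log\bigl(\mathcal{L}(z)/z\bigr)$ --- this is the analogue of Lemma \ref{lem.f(z)/z} --- but that relation only controls $\mathfrak{Re}\log(f/z)=\log|f(z)/z|$, i.e.\ it yields a growth theorem for $|f(z)|$, not the stated bounds on $\mathfrak{Re}\,f(z)$; and, as you note, one cannot exponentiate a subordination. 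Nor is the Ma--Minda subordination theorem $f(z)/z\prec\hat{f}(z)/z$ directly available: their standing hypothesis $\mathfrak{Re}\,\phi>0$ fails for $\phi=1+F_\alpha$, whose real part gets as small as $-\alpha/(1-\alpha)$ (indeed functions in $\mathcal{BS}(\alpha)$ need not be starlike in all of $\Delta$), so ``checking that its hypotheses are met'' cannot succeed as stated. You should know, however, that the paper's own proof is no more complete at this point: it simply invokes ``Lindel\"{o}f's principle of subordination'', which presupposes the very relation $f\prec\mathcal{L}$ in question. So you have not overlooked an idea that the paper supplies; you have correctly isolated the step that the published argument leaves unjustified, but your proposal does not close it either.
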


\begin{proof}
  Suppose that $f\in \mathcal{BS}(\alpha)$. Then by Lindel\"{o}f's principle of subordination (\cite{Good}), we get
  \begin{equation}\label{inf Re}
    \inf_{|z|\leq r} \mathfrak{Re}\{\mathcal{L}(z)\}\leq \inf_{|z|\leq r}\mathfrak{Re}\{f(z)\}\leq \sup_{|z|\leq r}\mathfrak{Re}\{f(z)\}\leq \sup_{|z|\leq r}\mathfrak{Re}\{|f(z)|\}\leq \sup_{|z|\leq r}\mathfrak{Re}\{\mathcal{L}(z)\}.
  \end{equation}
  Because $F_\alpha$ is a convex univalent function for $0\leq \alpha\leq 3-2\sqrt{2}$ and has real coefficients, hence $F_\alpha(\Delta)$ is a convex domain with respect to real axis. Moreover we have
  \begin{equation*}
    \sup_{|z|\leq r}\mathfrak{Re}\{\mathcal{L}(z)\}=\sup_{-r\leq z\leq r}\mathcal{L}(z)=\mathcal{L}(r)
  \end{equation*}
  and
    \begin{equation*}
    \inf_{|z|\leq r}\mathfrak{Re}\{\mathcal{L}(z)\}=\inf_{-r\leq z\leq r}\mathcal{L}(z)=\mathcal{L}(-r).
  \end{equation*}
 The proof of Theorem \ref{th. L} is thus completed.
\end{proof}

 \begin{theorem}
   Let $f\in \mathcal{BS}(\alpha)$, $0<\alpha \leq 3-2\sqrt{2}$, $r_s(\alpha)=\frac{\sqrt{1+4\alpha}-1}{2\alpha}\leq 0.8703$,
   \begin{equation*}
     F_\alpha(r_s(\alpha))=\max_{|z|=r_s(\alpha)<1}|F_\alpha(z)|\quad {and}\quad F_\alpha(-r_s(\alpha))=\min_{|z|=r_s(\alpha)<1}|F_\alpha(z)|.
   \end{equation*}
   Then we have
   \begin{equation}\label{|f'|}
     \frac{1}{1+r_s^2(\alpha)}(F_\alpha(r_s(\alpha))-1)\leq |f'(z)|\leq \frac{1}{1-r_s^2(\alpha)}(F_\alpha(r_s(\alpha))+1)
   \end{equation}
   and
   \begin{equation}\label{|f|}
     \int_{0}^{r_s(\alpha)}\frac{F_\alpha(t)}{1+t^2}{\rm d}t- \arctan r_s(\alpha)\leq |f(z)|\leq \frac{1}{2}\log \left(\frac{1+r_s(\alpha)}{1-r_s(\alpha)}\right)+\int_{0}^{r_s(\alpha)}\frac{F_\alpha(t)}{1-t^2}{\rm d}t
   \end{equation}
 \end{theorem}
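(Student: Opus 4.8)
The whole argument should be driven by a single pointwise estimate for $zf'(z)/f(z)$ read off from the subordination \eqref{BS}. Writing $f\in\mathcal{BS}(\alpha)$ in the form $zf'(z)/f(z)=1+F_\alpha(w(z))$ for a Schwarz function $w\in\mathfrak{B}$ with $|w(z)|\le|z|=r$, I would first record the extremal moduli of $F_\alpha$ on circles: the computation of $\mathfrak{Re}$ and $\mathfrak{Im}$ of $F_\alpha(re^{i\varphi})$ preceding \eqref{phir} shows that $\max_{|z|=r}|F_\alpha(z)|=F_\alpha(r)$ (attained on the positive real axis) and $\min_{|z|=r}|F_\alpha(z)|=r/(1+\alpha r^2)$, which is what the two auxiliary max/min identities assumed in the statement at the radius $r_s(\alpha)$ are recording. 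Since $|w(z)|\le r$, the subordination together with the maximum principle gives $|F_\alpha(w(z))|\le F_\alpha(r)$, and hence $1-F_\alpha(r)\le|zf'(z)/f(z)|\le 1+F_\alpha(r)$ for $|z|=r$.

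To reach \eqref{|f'|} I would factor $|f'(z)|=\bigl(|f(z)|/|z|\bigr)\,\bigl|zf'(z)/f(z)\bigr|$ and combine the bound just obtained with two-sided growth estimates $\frac{1}{1+r^2}\le|f(z)/z|\le\frac{1}{1-r^2}$. These I would extract from the logarithmic representation $\log\{f(z)/z\}=\int_0^z F_\alpha(w(t))/t\,{\rm d}t$, taking real parts along the radius and again invoking the modulus bound on $F_\alpha$; multiplying the two one-sided inequalities then yields \eqref{|f'|} at $|z|=r_s(\alpha)$. Here the hypothesis $0<\alpha\le 3-2\sqrt2$ (so that $F_\alpha$ is convex) and the restriction $r\le r_s(\alpha)\le 0.8703$, the radius at which $F_\alpha(r_s)=1$, are precisely what let one replace the sharp but unwieldy exponential majorant by the rational factors $1/(1\mp r^2)$.

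The growth bounds \eqref{|f|} should then follow by integrating \eqref{|f'|} radially. For the upper estimate, $|f(z)|\le\int_0^{r}|f'(\rho e^{i\theta})|\,{\rm d}\rho\le\int_0^{r_s(\alpha)}\frac{1+F_\alpha(t)}{1-t^2}\,{\rm d}t$, and splitting off $\int_0^{r_s(\alpha)}\frac{{\rm d}t}{1-t^2}=\frac12\log\frac{1+r_s(\alpha)}{1-r_s(\alpha)}$ reproduces the right-hand side of \eqref{|f|}. For the lower estimate one cannot simply integrate the lower bound for $|f'|$; instead I would use the classical minimum-modulus device, pulling back the straight segment joining the origin to a point of smallest image modulus on $|z|=r_s(\alpha)$ and integrating $|f'|$ along that preimage, and then use $\int_0^{r_s(\alpha)}\frac{{\rm d}t}{1+t^2}=\arctan r_s(\alpha)$ to obtain the left-hand side.

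I expect the main obstacle to be the lower bounds. The passage from $|f'|$ to $|f|$ from below rests on the preimage-of-segment argument, which presupposes enough geometric control on $f$ (the relevant part of the image being univalent), and this is not automatic for $\mathcal{BS}(\alpha)$, since the domain $D(\alpha)$ reaches to the left of the line $\mathfrak{Re}=-1$; confining $r$ to $r\le r_s(\alpha)$ is what is meant to salvage the argument. Similarly, the lower growth estimate $|f(z)/z|\ge 1/(1+r^2)$ feeding into \eqref{|f'|} is the fragile step and will need the most care. Throughout, the extremal function $\tilde f$ of \eqref{ftilde}, for which $w(z)\equiv z$ and $zf'/f=1+F_\alpha(z)$, both suggests the correct constants and certifies the sharpness of the inequalities.
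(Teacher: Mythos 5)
Your overall architecture coincides with the paper's: write $zf'(z)=f(z)\bigl(1+F_\alpha(w(z))\bigr)$, bound $|F_\alpha(w(z))|$ on $|z|=r_s(\alpha)$ by $F_\alpha(r_s(\alpha))$, sandwich $|f(z)/z|$ between $1/(1+r_s^2(\alpha))$ and $1/(1-r_s^2(\alpha))$, and then pass from $|f'|$ to $|f|$; your radial integration for the upper growth bound and the minimum-modulus segment device for the lower one are precisely what the paper's one-line appeal to Privalov's theorem packages. The genuine divergence is the source of the two-sided estimate for $|f(z)/z|$: the paper gets it by citing \cite{KEL} for starlikeness of $f$ in $|z|<r_s(\alpha)$ and then quoting a growth bound of the form $r/(1+r^2)\le|f(z)|\le r/(1-r^2)$, whereas you propose to read it off from $\log\{f(z)/z\}=\int_0^z F_\alpha(w(t))t^{-1}\,{\rm d}t$.

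That step is where your proposal has a real gap. Taking real parts in the integral representation and using $|F_\alpha(w(t))|\le F_\alpha(|t|)$ does not yield $1/(1\mp r^2)$; since $\int_0^r F_\alpha(t)t^{-1}{\rm d}t=\frac{1}{2\sqrt{\alpha}}\log\frac{1+\sqrt{\alpha}r}{1-\sqrt{\alpha}r}$, it yields exactly
\[
\left(\frac{1-\sqrt{\alpha}\,r}{1+\sqrt{\alpha}\,r}\right)^{\frac{1}{2\sqrt{\alpha}}}\le\left|\frac{f(z)}{z}\right|\le\left(\frac{1+\sqrt{\alpha}\,r}{1-\sqrt{\alpha}\,r}\right)^{\frac{1}{2\sqrt{\alpha}}},
\]
the bounds attained by $\tilde f(z)$ and $-\tilde f(-z)$. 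You assert that restricting to $r\le r_s(\alpha)$ lets one replace these by $1/(1\mp r^2)$, but you never prove the comparison, and on the lower side it is false: as $\alpha\to0^+$ one has $r_s(\alpha)\to1$, the exponential minorant at $r=r_s(\alpha)$ tends to $e^{-1}\approx0.368$, while $1/(1+r_s^2(\alpha))\to1/2$, so the function $-\tilde f(-z)\in\mathcal{BS}(\alpha)$ violates your claimed inequality $|f(z)/z|\ge1/(1+r^2)$ at $|z|=r_s(\alpha)$. (The theorem's stated lower bounds survive only because $\alpha r_s^2(\alpha)+r_s(\alpha)=1$ forces $F_\alpha(r_s(\alpha))=1$, so the left-hand side of \eqref{|f'|} is $0$ and that of \eqref{|f|} is negative — a triviality you half-notice but do not exploit.) On the upper side the needed comparison $\tilde f(r_s)/r_s\le1/(1-r_s^2)$ appears to hold but is an additional inequality your argument must supply. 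To close the gap you must either prove that comparison or follow the paper's route through the starlikeness radius of \cite{KEL} — whose quoted growth bound, it should be said, is that of odd starlike functions and itself warrants scrutiny.
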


\begin{proof}
  Let $f\in \mathcal{BS}(\alpha)$. Then by definition of subordination we have
  \begin{equation}\label{zf=1+F}
    \frac{zf'(z)}{f(z)}=1+F_\alpha(w(z)),
  \end{equation}
  where $w(z)$ is an analytic function $w(0)=0$ and $|w(z)|<1$. From \cite[Corollary 2.1]{KEL}, if $f\in \mathcal{BS}(\alpha)$, then $f$ is starlike univalent function in $|z|<r_s(\alpha)$, where $r_s(\alpha)=\frac{\sqrt{1+4\alpha}-1}{2\alpha}$. Thus if we define $q(z):\Delta_{r_s(\alpha)}\rightarrow \mathbb{C}$ by the equation $q(z):=f(z)$, where $\Delta_{r_s(\alpha)}:=\{z: |z|<r_s(\alpha)\}$, then $q(z)$ is starlike univalent function in $\Delta_{r_s(\alpha)}$ and therefore
  \begin{equation*}
    \frac{r_s(\alpha)}{1+r_s^2(\alpha)}\leq |q(z)|\leq \frac{r_s(\alpha)}{1-r_s^2(\alpha)}\qquad(|z|=r_s(\alpha)<1).
  \end{equation*}
  Now by \eqref{zf=1+F}, we have
  \begin{equation*}
    zf'(z)=q(z)(F_\alpha(z)+1)\qquad |z|=r_s(\alpha)<1.
  \end{equation*}
  Since $w(\Delta_{r_s(\alpha)})\subset \Delta_{r_s(\alpha)}$ and by the maximum principle for harmonic functions, we get
  \begin{align*}
    |f'(z)| &= \frac{|q(z)|}{|z|}|F_\alpha(w(z))+1| \\
    &\leq  \frac{1}{1-r_s^2(\alpha)}(|F_\alpha(w(z))|+1)\\
    &\leq \frac{1}{1-r_s^2(\alpha)}\left(\max_{|z|=r_s(\alpha)}|F_\alpha(w(z))|+1\right)\\
    &\leq \frac{1}{1-r_s^2(\alpha)}(F_\alpha(r_s(\alpha))+1).
  \end{align*}
  With the same proof we obtain
  \begin{equation*}
    |f'(z)|\geq \frac{1}{1+r_s^2(\alpha)}(F_\alpha(r_s(\alpha))-1).
  \end{equation*}
  Since the function
$f$ is a univalent function, the inequality for $|f(z)|$ follows from the
corresponding inequalities for $|f'(z)|$ by Privalov's Theorem \cite[Theorem 7, p. 67]{Good}.
\end{proof}

\begin{theorem}\label{t1}
  Let $F_\alpha(z)$ be given by \eqref{Falpha}. Then we have
  \begin{equation}\label{|F|}
 \frac{1}{1+\alpha}\leq \left|F_\alpha(z)\right|\leq \frac{1}{1-\alpha}\qquad (z\in \Delta-\{0\}, 0< \alpha<1).
  \end{equation}
\end{theorem}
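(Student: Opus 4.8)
The plan is to reduce both inequalities to a single two-sided estimate on $|1-\alpha z^2|$, since $F_\alpha(z)=z/(1-\alpha z^2)$ gives $|F_\alpha(z)|=|z|/|1-\alpha z^2|$. First I would set $z=re^{i\varphi}$ with $0<r\leq 1$ and expand
\begin{equation*}
  |1-\alpha z^2|^2 = 1-2\alpha r^2\cos 2\varphi+\alpha^2 r^4.
\end{equation*}
As a function of $\varphi$ this is extremal exactly at $\cos 2\varphi=\pm 1$, producing the sharp chain $1-\alpha r^2\leq|1-\alpha z^2|\leq 1+\alpha r^2$ and hence
\begin{equation*}
  \frac{r}{1+\alpha r^2}\leq|F_\alpha(z)|\leq\frac{r}{1-\alpha r^2}\qquad(|z|=r).
\end{equation*}

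For the upper bound I would then note that $r\mapsto r/(1-\alpha r^2)$ has derivative $(1+\alpha r^2)/(1-\alpha r^2)^2>0$ on $[0,1]$, so it increases to its maximal value $1/(1-\alpha)$ at $r=1$; thus $|F_\alpha(z)|\leq 1/(1-\alpha)$ throughout $\Delta$. Equivalently, since the poles $z=\pm\alpha^{-1/2}$ lie outside $\overline{\Delta}$, the function $F_\alpha$ is analytic on $\overline{\Delta}$, and the maximum modulus principle reduces the estimate to the circle $|z|=1$, where the computation above gives $1/(1-\alpha)$ directly.

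The genuinely delicate part, and the step I expect to be the main obstacle, is the lower bound. The minimum modulus principle is unavailable because $F_\alpha$ vanishes at the origin; indeed $|F_\alpha(z)|\to 0$ as $z\to 0$, so $1/(1+\alpha)\leq|F_\alpha(z)|$ cannot hold uniformly on $\Delta\setminus\{0\}$, and the value $1/(1+\alpha)$ is attained only in the boundary limit $r\to 1^-$ at $\cos 2\varphi=-1$. Accordingly, the sharp statement is best phrased on the boundary via the Booth lemniscate $\partial F_\alpha(\Delta)$: writing a boundary point as $w=\rho e^{i\psi}$, its defining equation yields $\rho^2=\cos^2\psi/(1-\alpha)^2+\sin^2\psi/(1+\alpha)^2$, so that $1/(1+\alpha)\leq\rho\leq 1/(1-\alpha)$ with both extremes attained. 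The identical two-sided bound holds verbatim on the closed disk for the quotient $|F_\alpha(z)/z|=|1/(1-\alpha z^2)|$, where the numerator $|z|$ is absent and the estimate on $|1-\alpha z^2|$ transfers directly.
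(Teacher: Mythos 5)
Your proposal is correct, and it actually does more than the paper's own argument: it isolates a genuine defect in the statement. Both you and the paper rest on the same computation. On $|z|=r$ one has $|1-\alpha z^2|^2=1-2\alpha r^2\cos 2\varphi+\alpha^2r^4$, so $|F_\alpha|$ is extremal at $\cos 2\varphi=\pm1$; the paper's function $H(t)=1/(1+\alpha^2-2\alpha\cos 2\theta)$, $t=\cos\theta$, is exactly your formula specialized to $r=1$. The difference lies in what is inferred from it. The paper opens with the claim that ``it is sufficient to consider $|F_\alpha(z)|$ on the boundary'' $\{F_\alpha(e^{i\theta}):\theta\in[0,2\pi]\}$ and then reads off \emph{both} bounds from the boundary values. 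For the upper bound this reduction is legitimate: it is the maximum modulus principle, which you invoke explicitly (your monotonicity argument for $r/(1-\alpha r^2)$ gives the same conclusion). For the lower bound it is not: one would need a minimum modulus principle, which requires the function to be zero-free, and $F_\alpha(0)=0$.

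As you observe, $|F_\alpha(z)|\to 0$ as $z\to 0$, so the left-hand inequality in \eqref{|F|} is false on $\Delta-\{0\}$ as stated; the paper's proof establishes it only for the boundary values $|F_\alpha(e^{i\theta})|$, and the implicit passage to the whole punctured disk is a gap that cannot be repaired. Your two corrected formulations are both valid: on the Booth lemniscate $\partial F_\alpha(\Delta)$ your polar identity $\rho^2=\cos^2\psi/(1-\alpha)^2+\sin^2\psi/(1+\alpha)^2$ exhibits $\rho$ as lying between $1/(1+\alpha)$ and $1/(1-\alpha)$ with both extremes attained, and for the zero-free quotient $F_\alpha(z)/z=1/(1-\alpha z^2)$ the two-sided bound holds on all of $\overline{\Delta}$ since $1-\alpha\leq|1-\alpha z^2|\leq 1+\alpha$ there. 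It is worth noting that the same defect propagates to Theorem \ref{co1}: for every $f\in\mathcal{A}$ the quantity $zf'(z)/f(z)-1$ vanishes at $z=0$, so the lower bound claimed there also fails near the origin; indeed the paper's own sharpness example satisfies $|z\widetilde{f}'(z)/\widetilde{f}(z)-1|=|F_\alpha(z)|\to 0$ as $z\to 0$.
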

\begin{proof}
  It is sufficient that to consider $\left|F_\alpha(z)\right|$ on the boundary
\begin{equation*}
  \partial F_\alpha(\Delta)=\left\{F_\alpha(e^{i\theta}): \theta\in [0,2\pi]\right\}.
\end{equation*}
 A simple check gives us
  \begin{equation}\label{x}
    x=\mathfrak{Re}\left\{F_\alpha(e^{i\theta})\right\}=\frac{(1-\alpha)\cos \theta}{1+\alpha^2-2\alpha\cos 2\theta}
  \end{equation}
  and
  \begin{equation}\label{y}
    y=\mathfrak{Im}\left\{F_\alpha(e^{i\theta})\right\}=\frac{(1+\alpha)\sin \theta}{1+\alpha^2-2\alpha\cos 2\theta}.
  \end{equation}
  Therefore, we have
  \begin{align}\label{|F|2}
    \left|F_\alpha (e^{i\theta})\right|^2&=\frac{1}{1+\alpha^2-2\alpha\cos 2\theta}\\
    &=\frac{1}{1+\alpha^2-2\alpha(2t^2-1)}=:H(t)\qquad(t=\cos\theta).
  \end{align}
  Since $0\leq t\leq1$, it is easy to see that $H'(t)\leq0$ when $-1\leq t\leq0$ and $H'(t)\geq0$ if $0\leq t\leq 1$.
 Thus
  \begin{equation*}
   \frac{1}{(1+\alpha)^2}\leq H(t)\leq \frac{1}{(1-\alpha)^2}\qquad (-1\leq t<0)
 \end{equation*}
 and
 \begin{equation*}
   \frac{1}{(1+\alpha)^2}\leq H(t)\leq \frac{1}{(1-\alpha)^2}\qquad (0< t\leq 1).
 \end{equation*}
  This completes the proof.
\end{proof}
A simple consequence of Theorem \ref{t1} as follows.

\begin{theorem}\label{co1}
  If $f\in \mathcal{BS}(\alpha)$ $(0< \alpha<1)$, then
  \begin{equation*}
  \frac{1}{1+\alpha}\leq  \left|\frac{zf'(z)}{f(z)}-1\right|\leq \frac{1}{1-\alpha}\qquad (z\in \Delta).
  \end{equation*}
The inequalities are sharp for the function $\tilde{f}$ defined by \eqref{ftilde}.
\end{theorem}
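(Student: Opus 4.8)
The plan is to reduce the statement directly to Theorem \ref{t1} by using the subordination that defines the class. Since $f\in\mathcal{BS}(\alpha)$, condition \eqref{BS} together with the definition of subordination furnishes a Schwarz function $w\in\mathfrak{B}$ (so $w(0)=0$ and $|w(z)|<1$ on $\Delta$) for which
\begin{equation*}
  \frac{zf'(z)}{f(z)}-1=F_\alpha(w(z))\qquad(z\in\Delta);
\end{equation*}
this is precisely the relation \eqref{zf=1+F} already recorded earlier. Because $w(z)\in\Delta$ for every $z\in\Delta$, the point $F_\alpha(w(z))$ lies in $F_\alpha(\Delta)$, and I would simply invoke Theorem \ref{t1} with $z$ replaced by $w(z)$ to obtain
\begin{equation*}
  \frac{1}{1+\alpha}\leq\left|F_\alpha(w(z))\right|\leq\frac{1}{1-\alpha}.
\end{equation*}
Rewriting the middle term through the displayed identity yields the assertion.

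For the sharpness I would turn to the extremal function $\tilde{f}$ in \eqref{ftilde}. Taking the logarithmic derivative of \eqref{ftilde}, the factor $1/(2\sqrt{\alpha})$ cancels and the two partial fractions combine, leaving
\begin{equation*}
  \frac{z\tilde{f}'(z)}{\tilde{f}(z)}=1+\frac{z}{1-\alpha z^2}=1+F_\alpha(z),
\end{equation*}
so that for $\tilde{f}$ the quantity in question is exactly $F_\alpha(z)$, i.e.\ the case $w(z)=z$. Evaluating the expression $|F_\alpha(e^{i\theta})|^2=1/(1+\alpha^2-2\alpha\cos2\theta)$ from \eqref{|F|2} at $\theta=\pi/2$ and $\theta=0$ shows that the lower and the upper bound are approached as $z$ tends to the corresponding boundary points, so neither constant can be improved.

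The step I expect to require the most care is not a computation but the correct reading of the domain and of the word \emph{sharp}. The lower bound cannot literally hold on all of $\Delta$: at $z=0$ one has $zf'(z)/f(z)-1\to0$, and more generally the estimate degenerates wherever $w(z)=0$, since $F_\alpha$ vanishes at the origin. Hence the defensible statement is that the inequality holds on $\Delta\setminus\{0\}$ in the same boundary sense as Theorem \ref{t1}, the constants $1/(1+\alpha)$ and $1/(1-\alpha)$ being the extreme moduli attained on the image of $\partial\Delta$ under $F_\alpha$, with the lower value realised only in the limit rather than at an interior point. I would make this qualification explicit so that both the appeal to Theorem \ref{t1} and the claim of sharpness remain rigorous.
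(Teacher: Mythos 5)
Your proof follows the paper's argument exactly: the paper likewise reduces the claim to Theorem \ref{t1} via the subordination identity $zf'(z)/f(z)-1=F_\alpha(w(z))$ and verifies sharpness by computing $z\tilde{f}'(z)/\tilde{f}(z)-1=F_\alpha(z)$ for the function in \eqref{ftilde}. Your closing caveat is well taken --- the lower bound degenerates wherever $w(z)=0$ (in particular at $z=0$, where the left-hand side vanishes), a point the paper's one-line proof silently ignores, so your qualified reading is if anything more careful than the original.
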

\begin{proof}
  By definition of subordination, and by using of Theorem \ref{t1}, the proof is obvious. For the sharpness of inequalities consider the function $\widetilde{f}$ which defined by $\eqref{ftilde}$. It is easy to see that
  \begin{equation*}
    \left|\frac{z\widetilde{f}'(z)}{\widetilde{f}(z)}-1\right|=\left|\frac{z}{1-\alpha z^2}\right|=|F_\alpha(z)|
  \end{equation*}
  and concluding the proof.
\end{proof}
The logarithmic coefficients $\gamma_n$ of $f(z)$ are defined by
\begin{equation}\label{log coef}
  \log\left\{\frac{f(z)}{z}\right\}=\sum_{n=1}^{\infty}2\gamma_n z^n\qquad (z\in \Delta).
\end{equation}
This coefficients play an important role for various estimates in the theory of univalent functions. For example, consider the Koebe function
\begin{equation*}
  k(z)=\frac{z}{(1-\mu z)^2}\qquad (\mu\in \mathbb{R}).
\end{equation*}
Easily seen that the above function $k(z)$ has logarithmic coefficients $\gamma_n(k)=\mu^n/n$ where $|\mu|=1$ and $n\geq1$. Also for $f\in \mathcal{S}$ we have
\begin{equation*}
  \gamma_1=\frac{a_2}{2}\quad{\rm and} \quad \gamma_2=\frac{1}{2}\left(a_3-\frac{a_2^2}{2}\right)
\end{equation*}
and the sharp estimates
\begin{equation*}
  |\gamma_1|\leq1 \quad{\rm and}\quad |\gamma_2|\leq \frac{1}{2}(1+2e^{-2})\approx 0.635\ldots,
\end{equation*}
hold. Also, sharp inequalities are known for sums involving logarithmic coefficients. For instance, the logarithmic coefficients $\gamma_n$ of every function $f\in \mathcal{S}$ satisfy the sharp inequality
\begin{equation}\label{ineq. pi26}
\sum_{n=1}^{\infty}|\gamma_n|^2\leq \frac{\pi^2}{6}
\end{equation}
and the equality is attained for the Koebe function (see \cite[Theorem 4]{DurLeu}).

The following lemma will be useful for the next result.
\begin{lemma}\label{lem.f(z)/z}{\rm (}see \cite[Theorem 2.1]{KarEba}{\rm )}
Let $f\in \mathcal{A}$ and $0\leq\alpha<1$. If $f\in
\mathcal{BS}(\alpha)$, then
\begin{equation}\label{11Th.f(z)/z}
    \log\frac{f(z)}{z}\prec\int_0^z \frac{P_{\alpha}(t)-1}{t}{\rm d}t\qquad (z\in\Delta),
\end{equation}
where
\begin{equation}\label{palpha}
    P_{\alpha}(z)-1=\frac{2}{\pi(1-\alpha)}i \log
    \left(\frac{1-e^{\pi
    i(1-\alpha)^2}z}{1-z}\right)\qquad (z\in\Delta)
\end{equation}
and
\begin{equation}\label{widP}
    \widetilde{P}_{\alpha}(z)=\int_0^z \frac{P_{\alpha}(t)-1}{t}{\rm d}t\qquad (z\in\Delta),
\end{equation}
are convex univalent in $\Delta$.
\end{lemma}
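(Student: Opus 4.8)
The plan is to reformulate the claim as a first-order differential subordination for $g(z):=\log\{f(z)/z\}$ and then to trade the (generally non-convex) dominant $F_\alpha$ for the convex function $P_\alpha-1$. Since $g(0)=0$ and $zg'(z)=zf'(z)/f(z)-1$, the membership $f\in\mathcal{BS}(\alpha)$ in \eqref{BS} is exactly the statement
\[
zg'(z)\prec F_\alpha(z)\qquad(z\in\Delta).
\]

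First I would establish the auxiliary subordination $F_\alpha\prec P_\alpha-1$. The function $P_\alpha-1$ of \eqref{palpha} is a logarithm of a M\"obius transform followed by a rotation and dilation, and one checks that it maps $\Delta$ conformally onto the vertical strip $\{w:|\mathfrak{Re}\,w|<1/(1-\alpha)\}$; in particular it is convex univalent and vanishes at the origin. On the other hand, formula \eqref{x} together with the maximum principle shows that $\mathfrak{Re}\,F_\alpha(z)$ stays strictly inside $(-1/(1-\alpha),\,1/(1-\alpha))$ for every $z\in\Delta$, so the Booth domain $F_\alpha(\Delta)=D(\alpha)$ lies inside that strip. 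Because $P_\alpha-1$ is univalent and both maps fix the origin, this containment is precisely $F_\alpha\prec P_\alpha-1$. Consequently $zg'(z)\prec P_\alpha(z)-1$ with a convex univalent dominant, equivalently $zf'(z)/f(z)\prec P_\alpha(z)$ with $P_\alpha(0)=1$.

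Next I would invoke the Hallenbeck--Ruscheweyh subordination theorem in the form: if $h$ is convex univalent with $h(0)=1$ and $zf'(z)/f(z)\prec h(z)$, then $\log\{f(z)/z\}\prec\int_0^z (h(t)-1)/t\,{\rm d}t$, the integral being the best dominant. Taking $h=P_\alpha$ yields
\[
\log\frac{f(z)}{z}\prec\int_0^z\frac{P_\alpha(t)-1}{t}\,{\rm d}t=\widetilde{P}_\alpha(z),
\]
which is \eqref{11Th.f(z)/z}. The convexity and univalence of $\widetilde{P}_\alpha$ then come for free: $P_\alpha-1$ is convex, hence starlike, and a short computation gives $1+z\widetilde{P}_\alpha''(z)/\widetilde{P}_\alpha'(z)=z(P_\alpha-1)'(z)/(P_\alpha-1)(z)$, whose real part is positive since $P_\alpha-1$ is starlike; thus its Alexander transform $\widetilde{P}_\alpha$ is convex univalent.

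The step I expect to be the main obstacle is the geometric identification in the second paragraph: one must verify carefully that the explicit logarithmic expression \eqref{palpha} really is the normalized conformal map of $\Delta$ onto the symmetric strip of width $2/(1-\alpha)$ (tracking the branch of the logarithm, the factor $i$, and the normalization at the origin), and that $D(\alpha)$ sits inside it with contact only along the boundary. Once the containment $F_\alpha\prec P_\alpha-1$ is secured, the differential-subordination machinery delivers both the conclusion \eqref{11Th.f(z)/z} and the asserted convexity of $\widetilde{P}_\alpha$ in \eqref{widP} with no further effort.
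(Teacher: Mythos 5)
The paper itself contains no proof of this statement: the lemma is imported verbatim from \cite[Theorem 2.1]{KarEba}, so there is no in-paper argument to compare yours against and I am judging your proposal on its own terms. Your architecture is the natural one and is surely what the cited source does: rewrite membership in $\mathcal{BS}(\alpha)$ as $z\bigl(\log\{f(z)/z\}\bigr)'\prec F_\alpha(z)$, replace the non-convex dominant $F_\alpha$ by a convex strip mapping, integrate by the standard subordination result ($zp'\prec zq'$ with $q$ convex implies $p\prec q$ --- this is Suffridge's lemma rather than Hallenbeck--Ruscheweyh proper, but that is cosmetic), and obtain convexity of $\widetilde{P}_\alpha$ from Alexander's theorem. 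The reduction, the integration step, and the convexity argument are all correct.

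The gap is exactly the step you postpone, and it does not close as stated. You need $(P_\alpha-1)(\Delta)$ to be the symmetric strip $\{w:|\mathfrak{Re}\,w|<1/(1-\alpha)\}$, since that is the smallest vertical strip containing $D(\alpha)=F_\alpha(\Delta)$ (your bound on $\mathfrak{Re}\,F_\alpha$ via \eqref{x} is correct, though the maximization over $\theta$ deserves a line). But the paper's own identification of $P_\alpha$ --- take $\mu=1/(\alpha-1)$, $\beta=1/(1-\alpha)$ in \eqref{P}, so that by \eqref{Omega} it is $P_\alpha(\Delta)$, not $(P_\alpha-1)(\Delta)$, that equals this symmetric strip, with $P_\alpha(0)=1$ --- yields $(P_\alpha-1)(\Delta)=\{-(2-\alpha)/(1-\alpha)<\mathfrak{Re}\,w<\alpha/(1-\alpha)\}$, which does not contain $D(\alpha)$ because $\alpha/(1-\alpha)<1/(1-\alpha)$; hence $F_\alpha\not\prec P_\alpha-1$ for $0<\alpha<1$. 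The literal formula \eqref{palpha}, with exponent $e^{\pi i(1-\alpha)^2}$, gives yet another strip, $\{-(1-\alpha)<\mathfrak{Re}\,w<2/(1-\alpha)-(1-\alpha)\}$, which likewise fails to contain $D(\alpha)$ once $\alpha>0$. Your chain of subordinations is valid only if $P_\alpha$ is taken to be $P_{\mu,\beta}$ with $\mu=\alpha/(\alpha-1)$ and $\beta=(2-\alpha)/(1-\alpha)$ (equivalently, exponent $e^{i\pi}=-1$ in \eqref{palpha}), i.e.\ the Kuroki--Owa map of the strip containing $1+D(\alpha)$, for which $P_\alpha-1$ really does map $\Delta$ onto the symmetric strip. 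You must either verify and adopt that normalization explicitly or flag the printed formula as inconsistent; as the proposal stands, the pivotal subordination $F_\alpha\prec P_\alpha-1$ is both unproved and, read against \eqref{palpha} as printed, false.
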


We remark that an analytic
function $P_{\mu,\beta}:\Delta\rightarrow \mathbb{C}$ by
\begin{equation}\label{P}
   P_{\mu,\beta}(z)=1+\frac{\beta-\mu}{\pi}i \log \left(\frac{1-e^{2\pi
   i\frac{1-\mu}{\beta-\mu}}z}{1-z}\right),\quad (\mu<1<\beta).
\end{equation}
 is a convex univalent function in $\Delta$, and has the form:
\begin{equation*}
  P_{\mu,\beta}(z)=1+\sum_{n=1}^{\infty} B_n z^n,
\end{equation*}
where
\begin{equation}\label{B-n}
B_n=\frac{\beta-\mu}{n\pi}i \left(1-e^{2n\pi
i\frac{1-\mu}{\beta-\mu}}\right),\qquad (n=1,2,\ldots).
\end{equation}

The above function $P_{\mu,\beta}(z)$ was introduced by Kuroki
and Owa \cite{KO2011} and they proved that $P_{\mu,\beta}$ maps
$\Delta$ onto a convex domain
\begin{equation}\label{Omega}
    P_{\mu,\beta}(\Delta)=\{ w\in \mathbb{C}: \mu<
    \mathfrak{Re}\{w\}<\beta\},
\end{equation}
conformally. Note that if we take $\mu=1/(\alpha-1)$ and $\beta=1/(1-\alpha)$ in \eqref{P}, then we have the function $P_\alpha$ which defined by \eqref{palpha}.
Now we have the following result about logarithmic coefficients.
\begin{theorem}
  Let $f\in \mathcal{A}$ belongs to the class $\mathcal{BS}(\alpha)$ and $0<\alpha<1$. Then the logarithmic coefficients of $f$ satisfy the inequality
  \begin{equation}\label{log ineq}
    \sum_{n=1}^{\infty}|\gamma_n|^2\leq\frac{1}{(1-\alpha)^2}\left[\frac{\pi^2}{45}-\frac{1}{\pi^2}
    \left(Li_4\left(e^{\pi(\alpha-2)i}\right)+Li_4\left(e^{\pi(2-\alpha)i}\right)\right)\right],
  \end{equation}
  where $Li_4$ is as following
  \begin{equation}\label{LI4}
    Li_4(z)=\sum_{n=1}^{\infty}\frac{z^n}{n^4}=-\frac{1}{2}\int_{0}^{1}\frac{\log^2(1/t)\log(1-tz)}{t}{\rm d}t.
  \end{equation}
  The inequality is sharp.
\end{theorem}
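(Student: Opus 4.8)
The plan is to combine the subordination supplied by Lemma \ref{lem.f(z)/z} with Littlewood's subordination theorem in $H^2$ (equivalently, Parseval's formula on circles $|z|=r$), and then to evaluate the resulting coefficient series in closed form through the tetralogarithm $Li_4$ defined in \eqref{LI4}.

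First I would record the Taylor data of both sides. Writing $\log\{f(z)/z\}=\sum_{n=1}^{\infty}2\gamma_n z^n$ as in \eqref{log coef}, Lemma \ref{lem.f(z)/z} gives $\log\{f(z)/z\}\prec\widetilde P_\alpha(z)$, where by \eqref{widP} and \eqref{palpha} one has $\widetilde P_\alpha(z)=\int_0^z\frac{P_\alpha(t)-1}{t}\,{\rm d}t$. Since $P_\alpha(z)-1=\sum_{n=1}^{\infty}B_n z^n$ with $B_n$ given by \eqref{B-n} for $\mu=1/(\alpha-1)$ and $\beta=1/(1-\alpha)$ — so that $(\beta-\mu)/\pi=2/(\pi(1-\alpha))$ and $2\pi i(1-\mu)/(\beta-\mu)=\pi i(2-\alpha)$ — termwise integration yields $\widetilde P_\alpha(z)=\sum_{n=1}^{\infty}\frac{B_n}{n}z^n$.

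The core analytic step is to pass from subordination to an $\ell^2$ domination of the coefficients. Because $\log\{f/z\}\prec\widetilde P_\alpha$, Littlewood's subordination theorem with exponent $p=2$, combined with Parseval on $|z|=r$, gives $\sum_{n=1}^{\infty}|2\gamma_n|^2 r^{2n}\le\sum_{n=1}^{\infty}|B_n/n|^2 r^{2n}$ for every $r<1$; here the univalence of $\widetilde P_\alpha$ furnished by Lemma \ref{lem.f(z)/z} is only used to legitimize the subordination itself. Letting $r\to1^-$, the right-hand side being a convergent series so that monotone convergence applies, I obtain $4\sum_{n=1}^{\infty}|\gamma_n|^2\le\sum_{n=1}^{\infty}|B_n|^2/n^2$. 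From \eqref{B-n} and $|1-e^{i\psi}|^2=2-2\cos\psi$ this reduces to $|B_n|^2/n^2=\frac{8}{\pi^2(1-\alpha)^2}\cdot\frac{1-\cos(n\pi(2-\alpha))}{n^4}$, hence $\sum_{n\ge1}|\gamma_n|^2\le\frac{2}{\pi^2(1-\alpha)^2}\sum_{n\ge1}\frac{1-\cos(n\pi(2-\alpha))}{n^4}$.

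It then remains to evaluate the trigonometric $\zeta$-series. Using $\sum_{n\ge1}n^{-4}=\pi^4/90$ together with $\sum_{n\ge1}\cos(n\psi)/n^4=\tfrac12\big(Li_4(e^{i\psi})+Li_4(e^{-i\psi})\big)$, which is just \eqref{LI4} split into its real part, with $\psi=\pi(2-\alpha)$ I get $\sum_{n\ge1}\frac{1-\cos(n\pi(2-\alpha))}{n^4}=\frac{\pi^4}{90}-\frac12\big(Li_4(e^{\pi(2-\alpha)i})+Li_4(e^{\pi(\alpha-2)i})\big)$; substituting and using $\frac{2}{\pi^2}\cdot\frac{\pi^4}{90}=\frac{\pi^2}{45}$ produces exactly \eqref{log ineq}. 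For sharpness I would take the Schwarz function in the subordination to be the identity, i.e.\ the extremal function determined by $\log\{f/z\}=\widetilde P_\alpha(z)$, for which $2\gamma_n=B_n/n$ and equality holds at every stage. The main obstacle is precisely the middle step: justifying the transition from the subordination in Lemma \ref{lem.f(z)/z} to the $\ell^2$ bound on $(\gamma_n)$ via Littlewood and Parseval, and the legitimacy of the $r\to1^-$ limit; once that is in place the closed-form evaluation in terms of $Li_4$ is routine.
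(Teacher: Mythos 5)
Your proposal follows essentially the same route as the paper: both pass from Lemma \ref{lem.f(z)/z} to the Taylor expansion of $\widetilde P_\alpha$, apply the $\ell^2$ coefficient inequality for subordinate functions (the paper cites this as Rogosinski's theorem, which is precisely the Littlewood--Parseval argument you describe), and then evaluate $\sum_{n\ge1}\bigl(1-\cos(n\pi(2-\alpha))\bigr)/n^4$ via $\zeta(4)=\pi^4/90$ and $Li_4$. Your sharpness discussion, taking the Schwarz function to be the identity so that $\log\{f(z)/z\}=\widetilde P_\alpha(z)$ and $2\gamma_n=B_n/n$, is likewise identical to the paper's choice $F(z)=z\exp\widetilde P_\alpha(z)$.
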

\begin{proof}
  If $f\in \mathcal{BS}(\alpha)$, then by using Lemma \ref{lem.f(z)/z} and with a simple calculation we get
  \begin{equation}\label{p1log}
    \log \frac{f(z)}{z}\prec\sum_{n=1}^{\infty}\frac{2}{\pi n^2(1-\alpha)}i\left(1-e^{\pi n (2-\alpha)i}\right)z^n\qquad(z\in \Delta).
  \end{equation}
  Now, by putting \eqref{log coef} into the last relation we have
  \begin{equation}\label{p2log}
        \sum_{n=1}^{\infty}2\gamma_n z^n\prec\sum_{n=1}^{\infty}\frac{1}{\pi n^2(1-\alpha)}i\left(1-e^{\pi n (2-\alpha)i}\right)z^n\qquad(z\in \Delta).
  \end{equation}
  Again, by Rogosinski's theorem \cite[6.2]{Dur}, we obtain
  \begin{align*}
    \sum_{n=1}^{\infty}|\gamma_n|^2 &\leq \sum_{n=1}^{\infty}\left|\frac{1}{\pi n^2(1-\alpha)}i\left(1-e^{\pi n (2-\alpha)i}\right)\right|^2 \\
     &= \frac{2}{\pi^2(1-\alpha)^2}\left(\sum_{n=1}^{\infty}\frac{1}{n^4}
     -\sum_{n=1}^{\infty}\frac{\cos\pi(2-\alpha)n}{n^4}\right)
  \end{align*}
  It is a simple exercise to verify that $\sum_{n=1}^{\infty}\frac{1}{n^4}=\pi^4/90$ and
  \begin{equation*}
    \sum_{n=1}^{\infty}\frac{\cos\pi(2-\alpha)n}{n^4}=\frac{1}{2}\left\{Li_4\left(e^{-i(2-\alpha)\pi
    }\right)+Li_4\left(e^{i(2-\alpha)\pi
    }\right)\right\}
  \end{equation*}
  and thus the desired inequality \eqref{log ineq} follows. For the sharpness of the inequality, consider
   \begin{equation}\label{sharp}
    F(z)=z\exp \widetilde{P}(z).
  \end{equation}
  It is easy to see that the function $F(z)$ belongs to the class $\mathcal{BS}(\alpha)$. Also, a simple check gives us
  \begin{equation*}
    \gamma_n(F(z))=\frac{1}{\pi n^2(1-\alpha)}i\left(1-e^{\pi n (2-\alpha)i}\right).
  \end{equation*}
Therefore the proof of this theorem is completed.
\end{proof}
\begin{theorem}
  Let $f\in \mathcal{BS}(\alpha)$. Then the logarithmic coefficients of $f$ satisfy
  \begin{equation*}
    |\gamma_n|\leq \frac{1}{2 n}\quad(n\geq1).
  \end{equation*}
\end{theorem}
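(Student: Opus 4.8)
The plan is to reduce the assertion to a single coefficient estimate for the auxiliary function $\frac{zf'(z)}{f(z)}-1$, which is governed directly by the defining subordination \eqref{BS}. First I would pass from the logarithmic coefficients to ordinary Taylor coefficients. Differentiating the defining relation \eqref{log coef} and multiplying by $z$ gives
\begin{equation*}
  \frac{zf'(z)}{f(z)}-1=\sum_{n=1}^{\infty}2n\gamma_n z^n\qquad(z\in\Delta),
\end{equation*}
so that $2n\gamma_n$ is precisely the $n$-th Taylor coefficient of the function appearing on the left-hand side of \eqref{BS}. Thus a uniform bound $|2n\gamma_n|\le 1$ is equivalent to the desired inequality $|\gamma_n|\le\frac{1}{2n}$.

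Next, since $f\in\mathcal{BS}(\alpha)$, this function is subordinate to $F_\alpha$, and from \eqref{Falpha} the majorant is normalized as $F_\alpha(z)=z+\alpha z^3+\cdots$, so that its first Taylor coefficient equals $1$. The core of the argument is then the classical Rogosinski coefficient inequality: if $g\prec G$ in $\Delta$ with $G$ convex univalent and $g(0)=G(0)$, every Taylor coefficient of $g$ is dominated in modulus by the modulus of the first coefficient of $G$. Applying this with $g(z)=\frac{zf'(z)}{f(z)}-1$ and $G=F_\alpha$ gives $|2n\gamma_n|\le 1$ for all $n\ge1$, which is exactly the claim.

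I expect the main obstacle to be the convexity hypothesis required to invoke the Rogosinski bound in the sharp form. As recalled before \eqref{L}, $F_\alpha$ is convex univalent only on the range $0\le\alpha\le 3-2\sqrt2$, and there the argument above is complete. For $3-2\sqrt2<\alpha<1$ the function $F_\alpha$ is merely starlike, the sharp convex coefficient bound is unavailable, and the estimates valid for a non-convex univalent majorant are too weak to retain the decisive factor $1/n$. It is tempting to invoke instead the subordination $\log\{f(z)/z\}\prec\widetilde{P}_\alpha(z)$ of Lemma \ref{lem.f(z)/z} to the convex function $\widetilde{P}_\alpha$ defined by \eqref{widP}; but the convex bound applied there yields only the $n$-independent estimate $|2\gamma_n|\le|B_1|$, with $B_1$ as in \eqref{B-n}, and so loses the crucial decay in $n$. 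I would therefore expect the clean proof to operate within the convexity range $0\le\alpha\le 3-2\sqrt2$; extending it to all $0<\alpha<1$ would require exploiting the explicit odd expansion $F_\alpha(z)=\sum_{k\ge0}\alpha^k z^{2k+1}$ together with a more delicate coefficient analysis of the composition $F_\alpha(w(z))$.
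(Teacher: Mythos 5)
Your argument is exactly the paper's: differentiate \eqref{log coef} to identify $2n\gamma_n$ as the Taylor coefficients of $zf'(z)/f(z)-1$, and then apply Rogosinski's subordination coefficient theorem with majorant $F_\alpha$, whose first Taylor coefficient equals $1$. Your caveat about convexity is well founded and applies equally to the published proof: the $n$-independent bound $|b_n|\le |A_1|$ needs a convex univalent majorant, so the argument as written is only justified for $0\le\alpha\le 3-2\sqrt{2}$, while for $3-2\sqrt{2}<\alpha<1$ the starlike version of Rogosinski's theorem yields only $|2n\gamma_n|\le n$ and the paper offers nothing to close that gap.
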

\begin{proof}
  If $f\in \mathcal{BS}(\alpha)$, then by definition $\mathcal{BS}(\alpha)$, we have
  \begin{equation*}
    \frac{zf'(z)}{f(z)}-1=z\left(\log\left\{\frac{f(z)}{z}\right\}\right)'
    \prec F_{\alpha}(z).
\end{equation*}
Thus
\begin{equation*}
  \sum_{n=1}^{\infty}2 n \gamma_n z^n\prec \sum_{n=1}^{\infty}\alpha^{n-1}z^{2n-1}.
\end{equation*}
Applying the Rogosinski theorem \cite{Rog}, we get the inequality $2n |\gamma_n|\leq 1$. This completes the proof.
\end{proof}

\section{The class $\mathcal{BK}(\alpha)$}
In this section we introduce a new class.
Our principal definition is the following.
\begin{definition}
  Let $0\leq \alpha< 1$ and $F_\alpha$ be defined by \eqref{Falpha}. Then $f\in \mathcal{A}$ belongs to the class $\mathcal{BK}(\alpha)$ if $f$ satisfies the following:
  \begin{equation}\label{BK}
     \frac{zf''(z)}{f'(z)}\prec F_{\alpha}(z)\qquad(z\in \Delta).
  \end{equation}
\end{definition}

\begin{remark}
By Alexander's lemma $f\in \mathcal{BK}(\alpha)$, if and only if $zf'(z)\in \mathcal{BS}(\alpha)$.
Thus, if $f\in \mathcal{A}$ belongs to the class $\mathcal{BK}(\alpha)$, then
  \begin{equation*}
    \frac{\alpha}{\alpha-1}<\mathfrak{Re}\left\{1+\frac{zf''(z)}{f'(z)}\right\}<\frac{2-\alpha}{1-\alpha}\qquad(z\in \Delta).
  \end{equation*}
  \end{remark}

 The following theorem provides us a method of finding the members of the class $\mathcal{BK}(\alpha)$.
\begin{theorem}\label{BKiff}
  A function $f\in \mathcal{A}$ belongs to the class $\mathcal{BK}(\alpha)$ if and only if there exists a analytic function $q$, $q(z)\prec F_\alpha(z)$ such that
  \begin{equation}\label{fBK}
    f(z)=\int_{0}^{z}\left(\exp \int_{0}^{\zeta}\frac{q(t)}{t}\right)d\zeta.
  \end{equation}
\end{theorem}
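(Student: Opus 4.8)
The plan is to take the auxiliary function $q$ to be precisely the expression appearing in the defining subordination, namely $q(z):=zf''(z)/f'(z)$, and then to reconstruct $f$ from $q$ by integrating the logarithmic derivative of $f'$ twice. The equivalence will then fall out almost immediately from the definition \eqref{BK}.

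For the forward implication I would begin with $f\in\mathcal{BK}(\alpha)$ and set $q(z)=zf''(z)/f'(z)$; by \eqref{BK} this $q$ is analytic and satisfies $q(z)\prec F_\alpha(z)$. Since subordination forces $q(0)=F_\alpha(0)=0$, the quotient $q(t)/t$ extends analytically across the origin, so the inner integral in \eqref{fBK} is well defined. Observing that $q(z)/z=f''(z)/f'(z)=(\log f'(z))'$ and integrating from $0$ to $z$ gives $\log f'(z)=\int_0^z q(t)/t\,\mathrm{d}t$, where the constant of integration vanishes because $f\in\mathcal{A}$ forces $f'(0)=1$. Exponentiating and integrating once more, using $f(0)=0$, yields the asserted representation \eqref{fBK}.

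For the converse I would assume $f$ is given by \eqref{fBK} for some analytic $q$ with $q\prec F_\alpha$, differentiate to obtain $f'(z)=\exp\int_0^z q(t)/t\,\mathrm{d}t$ and hence $f''(z)/f'(z)=q(z)/z$, so that $zf''(z)/f'(z)=q(z)\prec F_\alpha(z)$; this is exactly the condition \eqref{BK}. A short check that $f(0)=0$ and $f'(0)=\exp(0)=1$ confirms $f\in\mathcal{A}$, completing membership in $\mathcal{BK}(\alpha)$.

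The only delicate point, and the one I would emphasize, is the analyticity of $q(t)/t$ at $t=0$: this hinges on $q(0)=0$, which is guaranteed by the normalization $F_\alpha(0)=0$ together with the subordination $q\prec F_\alpha$. Once that is secured, everything else is a routine double integration, with the normalization constants $f(0)=0$ and $f'(0)=1$ handling the bookkeeping of the integration constants.
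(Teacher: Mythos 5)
Your proposal is correct and follows essentially the same route as the paper: the paper likewise sets $q(z)=F_\alpha(\omega(z))=zf''(z)/f'(z)$, integrates $(\log f'(z))'=q(z)/z$, and exponentiates, leaving the converse as a routine reversal. Your added remark that $q(0)=0$ makes $q(t)/t$ analytic at the origin is a detail the paper leaves implicit, but it does not change the argument.
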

\begin{proof}
  First, we let $f\in\mathcal{BK}(\alpha)$. Then from \eqref{BK} and by definition of subordination there exists a function $\omega\in \mathfrak{B}$ such that
  \begin{equation}\label{BKpt1}
    \frac{zf''(z)}{f'(z)}=F_\alpha(\omega(z))\qquad(z\in \Delta).
  \end{equation}
  Now we define $q(z)=F_\alpha(\omega(z))$ and so $q(z)\prec F_\alpha(z)$. The equation \eqref{BKpt1} readily gives
  \begin{equation*}
    \{\log f'(z)\}'=\frac{q(z)}{z}
  \end{equation*}
  and moreover
  \begin{equation*}
    f'(z)=\exp \left(\int_{0}^{\zeta}\frac{q(t)}{t}dt\right),
  \end{equation*}
  which upon integration yields \eqref{fBK}. Conversely, by simple calculations we see that if $f$ satisfies \eqref{fBK}, then $f\in \mathcal{BK}(\alpha)$ and therefore we omit the details.
\end{proof}

If we apply Theorem \ref{BKiff} with $q(z) = F_\alpha(z)$, then \eqref{fBK} with some easy calculations becomes
\begin{equation}\label{fhat}
  \hat{f}_\alpha(z):=z+\frac{z^2}{2}+\frac{1}{6}z^3+\frac{1}{12}\left(\alpha+\frac{1}{2}\right)z^4
  +\frac{1}{60}\left(4\alpha+\frac{1}{2}\right)z^5+\cdots.
\end{equation}
\begin{theorem}
  If a function $f(z)$ defined by \eqref{f} belongs to the class $\mathcal{BK}(\alpha)$, then
  \begin{equation*}
    |a_2|\leq \frac{1}{2} \quad {and }\quad |a_3|\leq \frac{1}{6}.
  \end{equation*}
  The equality occurs for $\hat{f}$ given in \eqref{fhat}.
\end{theorem}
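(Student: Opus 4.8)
The plan is to read off $a_2$ and $a_3$ directly from the defining subordination \eqref{BK} by matching the first two Taylor coefficients, and then to control them through the classical coefficient estimates for Schwarz functions.

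First I would invoke the definition of subordination: since $f\in\mathcal{BK}(\alpha)$, there is a function $\omega\in\mathfrak{B}$, say $\omega(z)=c_1z+c_2z^2+\cdots$, such that $zf''(z)/f'(z)=F_\alpha(\omega(z))$. Expanding the left-hand side from the normalization \eqref{f}, one finds
$$\frac{zf''(z)}{f'(z)}=2a_2z+\bigl(6a_3-4a_2^2\bigr)z^2+\cdots.$$
On the other side, because $F_\alpha(w)=w+\alpha w^3+\cdots$ and $\omega$ vanishes at the origin, the cubic term of $F_\alpha$ contributes only from order $z^3$ onward, so that
$$F_\alpha(\omega(z))=c_1z+c_2z^2+\cdots.$$
Equating the coefficients of $z$ and $z^2$ then yields $a_2=c_1/2$ and $a_3=(c_1^2+c_2)/6$. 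I would emphasize that the parameter $\alpha$ drops out at these orders, which is precisely why the asserted bounds are $\alpha$-free.

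Next I would apply the standard estimates for the coefficients of a Schwarz function, namely $|c_1|\le1$ and $|c_2|\le1-|c_1|^2$. The first immediately gives $|a_2|=|c_1|/2\le 1/2$. For the second, the triangle inequality combined with these estimates gives
$$|a_3|=\frac{|c_1^2+c_2|}{6}\le\frac{|c_1|^2+\bigl(1-|c_1|^2\bigr)}{6}=\frac16.$$
Finally, for the sharpness assertion I would take $\omega(z)=z$, i.e. $c_1=1$ and $c_2=0$, which forces $a_2=1/2$ and $a_3=1/6$; by Theorem \ref{BKiff} with $q(z)=F_\alpha(z)$ this is exactly the extremal function $\hat f_\alpha$ of \eqref{fhat}, whose expansion indeed begins $z+\tfrac{z^2}{2}+\tfrac{1}{6}z^3+\cdots$.

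The computations are entirely routine; the only point requiring genuine care is the inequality $|c_2|\le1-|c_1|^2$, which I would either cite or derive from the Schwarz lemma applied to the Schur transform of $\omega$. Everything else is bookkeeping, and no appeal to the value of $\alpha$ is needed for the bounds themselves.
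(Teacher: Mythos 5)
Your proposal is correct and follows essentially the same route as the paper: matching the coefficients of $z$ and $z^2$ in $zf''(z)/f'(z)=F_\alpha(\omega(z))$ to get $2a_2=c_1$ and $6a_3=c_1^2+c_2$, then applying the Schwarz-function estimates $|c_1|\le1$ and $|c_2|\le1-|c_1|^2$, with $\omega(z)=z$ giving sharpness via $\hat f_\alpha$. If anything, your write-up is slightly more careful than the paper's, which quotes only $|b_k|\le1$ but then silently uses the sharper bound $|b_1|^2+|b_2|\le1$ that you justify explicitly.
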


\begin{proof}
  Assume that $f\in \mathcal{BK}(\alpha)$. Then from \eqref{BK} we have
  \begin{equation}\label{cof}
    \frac{zf''(z)}{f'(z)}=\frac{\omega(z)}{1-\alpha \omega^2(z)},
  \end{equation}
  where $\omega\in \mathfrak{B}$ and has the form $\omega(z)=b_1z+b_2 z^2+b_3 z^3+\cdots$. It is fairly well-known that if $|\omega(z)|=|b_1z+b_2 z^2+b_3 z^3+\cdots|<1$ $(z\in \Delta)$, then for all $k\in \mathbb{N}=\{1,2,3,\ldots\}$ we have $|b_k|\leq1$. Comparing the initial
coefficients in \eqref{cof} gives
\begin{equation}\label{a2a3}
  2a_2=b_1\quad {\rm and }\quad 6a_3-4a_2^2=b_2.
\end{equation}
  Thus $|a_2|\leq 1/2$ and $6a_3=b_1^2+b_2$. Since $|b_1|^2+|b_2|\leq 1$, therefore the assertion is obtained.
\end{proof}

\begin{corollary}
  It is well known that for $\omega(z)=b_1z+b_2 z^2+b_3 z^3+\cdots\in \mathfrak{B}$ for all $\mu\in \mathbb{C}$, we have $|b_2-\mu b_1^2|\leq \max \{1, |\mu|\}$. Therefore the Fekete-Szeg\"{o} inequality i.e. estimates of $|a_3-\mu a_2^2|$ for the class $\mathcal{BK}(\alpha)$ is equal to
  \begin{equation*}
    |a_3-\mu a_2^2|\leq \frac{1}{6}\max\left\{1,\left|\frac{3\mu}{2}-1\right|\right\}\qquad (\mu\in \mathbb{C}).
  \end{equation*}
\end{corollary}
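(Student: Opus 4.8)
The plan is to reduce the Fekete--Szeg\"{o} functional $a_3-\mu a_2^2$ to the canonical Schwarz-function expression $b_2-\nu b_1^2$ and then invoke the quoted coefficient bound $|b_2-\nu b_1^2|\leq\max\{1,|\nu|\}$. The whole argument rests on the coefficient identities already extracted in the proof of the preceding coefficient theorem, so no new structural input about $\mathcal{BK}(\alpha)$ is needed.

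First I would recall, from the comparison of initial coefficients in \eqref{cof}, the two relations recorded in \eqref{a2a3}, namely $2a_2=b_1$ and $6a_3-4a_2^2=b_2$, where $\omega(z)=b_1z+b_2z^2+\cdots\in\mathfrak{B}$. Solving these gives $a_2=b_1/2$ and $a_3=(b_1^2+b_2)/6$. Second, I would substitute these into $a_3-\mu a_2^2$ and collect the $b_1^2$-terms, which yields
\begin{equation*}
  a_3-\mu a_2^2=\frac{b_1^2+b_2}{6}-\frac{\mu b_1^2}{4}=\frac{1}{6}\left(b_2-\left(\frac{3\mu}{2}-1\right)b_1^2\right).
\end{equation*}
Third, setting $\nu=\frac{3\mu}{2}-1$ and applying the stated bound (valid for every $\nu\in\mathbb{C}$) immediately gives
\begin{equation*}
  |a_3-\mu a_2^2|\leq\frac{1}{6}\max\left\{1,\left|\frac{3\mu}{2}-1\right|\right\}.
\end{equation*}

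I expect no genuine obstacle here, which is precisely why the result is presented as a corollary rather than a theorem: it is a direct translation of the universal estimate for Schwarz coefficients through the linear change $\nu=\frac{3\mu}{2}-1$. The only point requiring care is the elementary bookkeeping of the coefficient multiplying $b_1^2$, since it is exactly that coefficient which produces the factor $\frac{3\mu}{2}-1$ appearing inside the maximum; a sign slip or a misplaced factor of $\tfrac{3}{2}$ there is the only way the computation could go wrong.
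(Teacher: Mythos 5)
Your proposal is correct and follows exactly the route the paper intends: it takes the identities $2a_2=b_1$ and $6a_3-4a_2^2=b_2$ from \eqref{a2a3}, rewrites $a_3-\mu a_2^2=\tfrac{1}{6}\bigl(b_2-(\tfrac{3\mu}{2}-1)b_1^2\bigr)$, and applies the standard bound $|b_2-\nu b_1^2|\leq\max\{1,|\nu|\}$ with $\nu=\tfrac{3\mu}{2}-1$. The paper leaves this computation implicit in the corollary statement, and your bookkeeping of the $b_1^2$ coefficient is accurate.
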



{\bf Competing interests.}
The authors declare that they have no competing interests.

{\bf Authors' contributions.}
All authors of the manuscript have read and agreed to its content and are accountable for all aspects of the accuracy and integrity of the manuscript.

{\bf Acknowledgements.} The authors are thankful to the referee for the useful
suggestions.

\end{document}